\documentclass{amsart}
\usepackage{amsfonts,amssymb,amscd,amsmath,enumerate,verbatim,calc}
\usepackage[all]{xy}

\newcommand{\CM}{Cohen-Macaulay}

\newcommand{\wrt}{with respect to}

\newcommand{\n}{\mathfrak{n} }
\newcommand{\m}{\mathfrak{m} }

\newcommand{\rt}{\rightarrow}

\newcommand{\ov}{\overline}

\newcommand{\Ass}{\operatorname{Ass}}
\newcommand{\grade}{\operatorname{grade}}

\newcommand{\Spec}{\operatorname{Spec}}

\newcommand{\height}{\operatorname{height}}
\newcommand{\charp}{\operatorname{char}}

\newcommand{\injdim}{\operatorname{injdim}}

\newcommand{\Hom}{\operatorname{Hom}}
\newcommand{\Ext}{\operatorname{Ext}}

\newcommand{\Supp}{\operatorname{Supp}}

\theoremstyle{plain}

\newtheorem{theorem}{Theorem}[section]

\newtheorem{proposition}[theorem]{Proposition}

\theoremstyle{definition}
\newtheorem{definition}[theorem]{Definition}

\newtheorem{remark}[theorem]{Remark}

\theoremstyle{remark}

\begin{document}

\title[Peskine-Szpiro Ideals]{On a generalization of a result of Peskine and Szpiro }
 \author{Tony J. Puthenpurakal}
\date{\today}
\address{Department of Mathematics, Indian Institute of Technology Bombay, Powai, Mumbai 400 076, India}
\email{tputhen@math.iitb.ac.in}
\subjclass{Primary 13D45; Secondary 13D02, 13H10 }
\keywords{local cohomology, bass numbers,   associate primes}
\begin{abstract}

Let $(R,\m)$ be a regular local ring containing a field  $K$. 
Let $I$ be a \CM \ ideal of height $g$. 
If $\charp K = p > 0$ then by a result of Peskine and Szpiro the local cohomology modules $H^i_I(R)$ vanish for $i > g$.
This result is not true if $\charp K = 0$.
However we prove that the Bass numbers of the local cohomology module $H^g_I(R)$ completely determine whether $H^i_I(R)$ vanish for $i > g$.
\end{abstract}

\maketitle
\textit{The result of this paper has been proved more generally for Gorenstein local rings by Hellus and Schenzel  \cite[Theorem 3.2]{HS}. However our result for regular rings is elementary to prove. In particular we do not use spectral sequences in our proof.}
\section{introduction}
Let $(R,\m)$ be a regular local ring containing a field $K$.  Motivated by a result of Peskine and Szpiro we make the following:

\begin{definition}
An ideal $I$  of $R$ is said to be a  \emph{Peskine-Szpiro ideal}  of $R$ if
\begin{enumerate}
\item
$I$ is a \CM \ ideal.
\item
$H^i_I(R) = 0 $ for all $i \neq \height I$.

\end{enumerate}

\end{definition}
  
Note that as $\height I = \grade I$ we have $H^i_I(R) = 0$ for $i < \height I$. Thus the only real condition for a \CM \ ideal  $I$  to be a Peskine-Szpiro ideal is that $H^i_I(R) = 0$ for $i > \height  I$.
In their fundamental paper  \cite[Proposition III.4.1]{PS} Peskine and Szpiro proved that if $\charp K = p > 0 $ then for all \CM \ ideals $I$ the local cohomology modules $H^i_I(R)$ vanish for $i > \height I$. This result is not true if $\charp K = 0$, for instance  see \cite[Example 21.31]{a7}.   We prove the following surprising result:
\begin{theorem}\label{main}
Let $(R,\m)$ be a regular local ring  of dimension $d$ containing a field $K$. Let $I$  be a \CM \ ideal of height $g$. The following conditions are equivalent:
\begin{enumerate}[\rm (i)]
\item 
$I$ is a Peskine-Szpiro ideal of $R$.
\item
For any prime ideal $P$ of $R$ containing $I$,  the Bass number
\[
\mu_i(P, H^g_I(R)) =  \begin{cases}  1 &\text{if }  \  i = \height P - g, \\
                                                                  0 & \text{otherwise.}    \end{cases}
\]
\end{enumerate}
\end{theorem}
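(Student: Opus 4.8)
The plan is to reduce everything to one elementary computation and then convert between $\Tor$ and Bass numbers by Koszul self-duality, so that no spectral sequence is ever needed. First I would localize. Since $\mu_i(P,H^g_I(R))=\mu_i(PR_P,H^g_{I_P}(R_P))$, since $H^j_{I_P}(R_P)=H^j_I(R)_P$, and since each $H^j_I(R)$ is supported on $V(I)$, checking either condition reduces to the contribution of a single prime $P\supseteq I$; replacing $R$ by $R_P$ (which keeps $I_P$ \CM \ of height $g$, as \CM \ ideals are unmixed and the quotient localizes to a \CM \ module), I may assume $P=\m$ and work in a regular local ring $(R,\m,k)$ of dimension $d=\height P$, computing only $\mu_i(\m,M)$ where $M=H^g_I(R)$. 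Let $\check{C}^\bullet$ be the extended \v{C}ech complex on generators of $I$: a bounded complex of flat $R$-modules with $H^i(\check{C}^\bullet)=H^i_I(R)$. The engine is $\check{C}^\bullet\otimes_R k\cong k$ in degree $0$, because $\check{C}^i\otimes_R k$ is a sum of modules $R_f\otimes_R k=0$ for $i>0$ while $\check{C}^0\otimes_R k=k$; as $\check{C}^\bullet$ consists of flats this also computes $\check{C}^\bullet\otimes^{L}k$. Resolving $k$ instead by the Koszul complex $K_\bullet$ on a regular system of parameters (a minimal free resolution, $R$ being regular) and using self-duality $\Hom_R(K_\bullet,R)\cong K_\bullet[-d]$ gives $\mu_i(\m,N)=\dim_k\Ext^i_R(k,N)=\dim_k\Tor^R_{d-i}(k,N)$ for every $N$. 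Finally, as $H^j_I(R)=0$ for $j<g=\grade I$, the good truncation $\tau_{\le g}\check{C}^\bullet$ is quasi-isomorphic to $M[-g]$, and the inclusion $\alpha\colon\tau_{\le g}\check{C}^\bullet\hookrightarrow\check{C}^\bullet$ is the identity on $\check{C}^0=R$.

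For (i)$\Rightarrow$(ii): if $H^j_I(R)=0$ for $j>g$ then $\check{C}^\bullet$ is quasi-isomorphic to $M[-g]$, so $M[-g]\otimes^{L}k\cong\check{C}^\bullet\otimes^{L}k\cong k$ in homological degree $0$; reading homology gives $\Tor^R_j(k,M)=\delta_{jg}\,k$, whence $\mu_i(\m,M)=\dim_k\Tor^R_{d-i}(k,M)=\delta_{i,d-g}$, which is (ii) after undoing the localization ($d=\height P$). For (ii)$\Rightarrow$(i) I argue by induction on $d$. When $d=g$ the ideal $I$ is $\m$-primary and $H^j_I=H^j_\m$ vanishes for $j\ne d=g$. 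For $d>g$, localizing (ii) at a prime $P\in V(I)$ with $P\ne\m$ and applying the inductive hypothesis to $R_P$ yields $H^j_I(R)_P=0$ for all $j>g$; hence every $H^j_I(R)$ with $j>g$ is supported only at $\m$, i.e.\ is $\m$-torsion. Now (ii) at $\m$ gives $\Tor^R_j(k,M)=\delta_{jg}\,k$, so both $M[-g]\otimes^{L}k$ and $\check{C}^\bullet\otimes^{L}k$ have homology $k$ concentrated in degree $0$; since $\alpha$ is the identity on $\check{C}^0$ it induces the identity on $H_0$, so $\alpha\otimes^{L}k$ is a quasi-isomorphism. Letting $D^\bullet$ be the cone of $\alpha$ (with $H^j(D^\bullet)=H^j_I(R)$ for $j>g$ and $0$ otherwise), exactness of $-\otimes^{L}k$ gives $\Tor^R_\bullet(k,D^\bullet)=0$. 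But each cohomology of $D^\bullet$ is $\m$-torsion, and a nonzero $\m$-torsion module has nonzero socle, so the top cohomology of $D^\bullet$, if nonzero, would contribute $\Tor^R_d$ in the extreme total degree with no room for cancellation; therefore $D^\bullet=0$ and $H^j_I(R)=0$ for $j>g$.

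The main obstacle is concentrated in the converse, in two linked points. The first is that equal Betti numbers are not by themselves enough: one must exhibit a genuine quasi-isomorphism, and this is exactly what the identity of $\alpha$ on $\check{C}^0$ provides after $\otimes^{L}k$, upgrading the numerical coincidence coming from (ii) into an actual acyclicity of the cone. The second is the implication $\Tor^R_\bullet(k,D^\bullet)=0\Rightarrow D^\bullet=0$, which is false for complexes with arbitrary cohomology and becomes valid only once the higher local cohomologies are known to be $\m$-torsion; this is precisely why the induction on $d=\dim R$, used to detect vanishing away from $\m$ and thereby confine the higher modules to the closed point, is indispensable. Everything else is the bookkeeping of truncations and of the \v{C}ech--Koszul tensor computation, which is elementary.
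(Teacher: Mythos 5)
Your route is genuinely different from the paper's: the paper proves both implications by induction on $d-g$ using hyperplane sections $x\in\m\setminus\m^2$ and Rees's lemma, and for (ii)$\Rightarrow$(i) it needs the Huneke--Sharp/Lyubeznik structure theory (to write the higher $H^i_I(R)$ supported at $\m$ as $E_R(k)^{r_i}$) together with the cases $d-g\le 2$ (Hartshorne--Lichtenbaum, Ogus) as the base of the induction. You instead run everything through the \v{C}ech complex, the Koszul self-duality $\mu_i(\m,N)=\dim_k\Tor^R_{d-i}(k,N)$, and a truncation/cone argument. Your (i)$\Rightarrow$(ii) is correct and cleaner than the paper's, and your reduction of (ii)$\Rightarrow$(i) to ``$D^\bullet\otimes^{L}k\simeq 0$ and all $H^j(D^\bullet)$ $\m$-torsion imply $D^\bullet\simeq 0$'' is the right idea; if completed, your proof would even be more self-contained than the paper's (no structure theorems, no Ogus, no characteristic-$p$ input). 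However, the final step, which is the crux of the converse, is wrong as stated.

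You claim that ``the top cohomology of $D^\bullet$, if nonzero, would contribute $\Tor^R_d$ in the extreme total degree with no room for cancellation.'' Write $s$ and $t$ for the smallest and largest $j$ with $H^j(D^\bullet)\neq 0$. The extreme total degrees of $D^\bullet\otimes^{L}k$ are $t$, carrying $\Tor_0(k,H^t(D^\bullet))=H^t(D^\bullet)\otimes k$, and $s-d$, carrying $\Tor_d(k,H^s(D^\bullet))=\operatorname{socle}(H^s(D^\bullet))$. The term you invoke, $\Tor_d(k,H^t(D^\bullet))$, sits in total degree $t-d$, which is extreme only when $s=t$; otherwise the terms $\Tor_{q-t+d}(k,H^q(D^\bullet))$ for $s\le q<t$ land in the same total degree and cancellation genuinely occurs. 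Concretely, over $R=k[[x,y]]$ let $D^\bullet$ be the mapping cone of a map $k[-2]\to k$ representing a nonzero class $w\in\Ext^2_R(k,k)$: then $H^0(D^\bullet)=H^1(D^\bullet)=k$ are $\m$-torsion, but the connecting map $\Tor_2(k,H^1(D^\bullet))\to\Tor_0(k,H^0(D^\bullet))$ is the Koszul pairing against $w$, which is nonzero hence bijective, so the socle of the top cohomology contributes nothing to $H^{-1}(D^\bullet\otimes^{L}k)$. (The other extreme, $\Tor_0$ of the top cohomology, is useless here: it vanishes for divisible modules such as $E_R(k)$ --- exactly the modules you are trying to rule out.) So the implication you want is true, but not for the reason you give. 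The repair is to run your socle argument at the bottom: with $s$ as above, one has $H^{s-d}(D^\bullet\otimes^{L}k)\cong\Tor_d(k,H^s(D^\bullet))=\operatorname{socle}(H^s(D^\bullet))\neq 0$, because every other contribution $\Tor_i(k,H^q(D^\bullet))$ with $0\le i\le d$ and $q\ge s$ lives in total degree $q-i>s-d$ unless $(i,q)=(d,s)$; this contradicts acyclicity of $D^\bullet\otimes^{L}k$ and finishes the proof. To justify that identification while keeping your (and the paper's) no-spectral-sequence pledge, you need a short lemma --- if $H^j(E^\bullet)=0$ for $j<c$, then $H^j(E^\bullet\otimes^{L}k)=0$ for $j<c-d$ and $H^{c-d}(E^\bullet\otimes^{L}k)\cong\Tor_d(k,H^c(E^\bullet))$ --- proved by induction on the number of nonzero cohomologies via truncation triangles, using $\Tor_{>d}=0$. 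A second, smaller, gap: $\tau_{\le g}\check{C}^\bullet$ is not a complex of flat modules, so ``$\alpha$ is the identity on $\check{C}^0$'' does not by itself compute $H^0(\alpha\otimes^{L}k)$; fix it by noting that the augmentation $R\to\check{C}^\bullet$ factors through $\tau_{\le g}\check{C}^\bullet$ (here $g\ge 1$), whence $H^0(\alpha\otimes^{L}k)$ is surjective and therefore an isomorphism of one-dimensional spaces. With these two repairs your argument is complete and correct.
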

Here the $j^{th}$ Bass number of an $R$-module $M$ with respect to a prime ideal $P$ is defined as $\mu_j(P,M) = \dim_{k(P)} \Ext^j_{R_P}(k(P), M_P)$ where $k(P)$ is the residue field of $R_P$.  Our result is essentially only an observation.

\s \label{HSL} We need the following remarkable properties of local cohomology modules over regular local rings containing a field (proved by Huneke and Sharp \cite{HuSh} if $\charp K = p > 0$ and by Lyubeznik  \cite{Lyu-1} if $\charp K = 0$).
Let  $(R,\m)$ be a regular ring containing a field $K$  and $I$ is an ideal in $R$.  Then the local cohomology modules of $R$ \wrt \ $I$ have the following properties:
\begin{enumerate}[\rm(i)]
\item
$H^j_{\m}(H^i_I(R))$ is injective.
\item
$\injdim_R H^i_I(R) \leq \dim \Supp H^i_I(R)$.
\item
The set of associated primes of $H^i_I(R)$ is finite.
\item
All the Bass numbers of $H^i_I(R)$ are finite.
\end{enumerate}
Here $\injdim_R H^i_I(R)$ denotes the injective dimension of $H^i_I(R)$. Also $\Supp M = \{ P \mid  M_P \neq 0 \ \text{and $P$ is a prime in $R$}\}$ is the support of an $R$-module $M$. 

\section{Permanence properties of Peskine-Szpiro ideals}
In this section we prove some permanence properties of Peskine-Szpiro ideals. We also show that if $\dim R - \height I \leq 2$ then a \CM \ ideal $I$ is a Peskine-Szpiro ideal.

\begin{proposition}\label{perm}
Let $(R,\m)$ be a regular local ring containing a field $K$. Let $I$ be a Peskine-Szpiro ideal of $R$.  Let $g = \height I$.
\begin{enumerate}[\rm (1)]
\item
Let $P$ be a prime ideal in $R$ containing $I$. Then $I_P$ is a Peskine-Szpiro ideal of $R_P$.
\item
Assume $g <  \dim R$. Let $x \in \m \setminus \m^2$ be $R/I$-regular. Then the ideal $(I + (x))/(x)$ is a Peskine-Szpiro ideal of $R/(x)$. 
\end{enumerate}
\end{proposition}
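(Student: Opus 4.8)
The plan is to treat the two parts separately, with part (1) being a routine ``localize everything'' argument and part (2) carrying the real content.

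For part (1), I would observe that each ingredient of the Peskine-Szpiro condition localizes. First, $R_P$ is again a regular local ring containing $K$. Since $R/I$ is \CM \ and $P \supseteq I$, the localization $(R/I)_P = R_P/I_P$ is \CM, so $I_P$ is a \CM \ ideal; and since a \CM \ ideal is unmixed, $\height I_P = \height I = g$. Finally, local cohomology commutes with localization, so $H^i_{I_P}(R_P) \cong \big(H^i_I(R)\big)_P$. As $H^i_I(R) = 0$ for all $i > g$ and (automatically, since $\grade = \height$) for all $i < g$, the same vanishing passes to the localization, and $I_P$ is a Peskine-Szpiro ideal of $R_P$.

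For part (2), write $S = R/(x)$ and $J = (I+(x))/(x) = IS$. First I would check the structural facts. Because $x \in \m \setminus \m^2$ it is part of a regular system of parameters, so $S$ is a regular local ring of dimension $\dim R - 1$; and since $x$ is a nonunit we have $K \cap (x) = 0$, so $K$ embeds in $S$. Next, $S/J = (R/I)/x(R/I)$, and since $x$ is a nonzerodivisor on the \CM \ ring $R/I$ the quotient is \CM \ of dimension $\dim R/I - 1$. Hence $J$ is a \CM \ ideal, and a dimension count ($\dim S/J = \dim S - g$, with $J$ unmixed since \CM) gives $\height J = g$. As in the introduction, this already forces $H^i_J(S) = 0$ for $i < g$, so only the range $i > g$ remains.

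The main step is the vanishing $H^i_J(S) = 0$ for $i > g$, and here I would use two facts. The first is the independence-of-base isomorphism for the surjection $R \rt S$: for the $S$-module $S$ one has $H^i_J(S) = H^i_{IS}(S) \cong H^i_I(S)$, where on the right $S$ is viewed as an $R$-module. The second is that $x$ is a nonzerodivisor on $R$ (a domain), giving the short exact sequence $0 \rt R \xrightarrow{x} R \rt S \rt 0$ of $R$-modules. Applying $H^\bullet_I(-)$ yields a long exact sequence whose relevant piece is
\[
H^i_I(R) \xrightarrow{\ x\ } H^i_I(R) \rt H^i_I(S) \rt H^{i+1}_I(R) \xrightarrow{\ x\ } H^{i+1}_I(R).
\]
For $i > g$ both $H^i_I(R)$ and $H^{i+1}_I(R)$ vanish because $I$ is a Peskine-Szpiro ideal, so $H^i_I(S) = 0$, and therefore $H^i_J(S) = 0$. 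Combined with the \CM-ness and $\height J = g$ from the previous paragraph, this shows $J$ is a Peskine-Szpiro ideal of $S$. I expect the one genuine obstacle to be the base-change isomorphism $H^i_{IS}(S) \cong H^i_I(S)$: everything else is bookkeeping, but without this identification the long exact sequence computes cohomology with support in the wrong ideal. Once it is invoked, the argument closes immediately.
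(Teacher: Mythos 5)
Your proof is correct and follows essentially the same route as the paper: part (1) by localizing all the data, and part (2) via the long exact sequence in local cohomology arising from $0 \rt R \xrightarrow{x} R \rt R/(x) \rt 0$. The only difference is one of exposition: you make explicit the independence-of-base isomorphism $H^i_{IS}(S) \cong H^i_I(S)$, which the paper uses silently when it writes $H^i_J(\ov{R})$ directly in its long exact sequence.
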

\begin{proof}
(1)  Note $I_P$ is a \CM  \  ideal of height $g$ in $R_P$. Also  note that for $i \neq g$ we have
\[
H^i_{I_P}(R_P) = H^i_I(R)_P = 0.
\] 
Thus $I_P$ is a Peskine-Szpiro ideal of $R_P$.

(2) Note that $ J = (I + (x))/(x)$ is a \CM \ ideal of height $g$ in the regular ring $ \ov{R} = R/(x)$. The short exact sequence
\[
0 \rt R \xrightarrow{x} R \rt \ov{R} \rt 0,
\]
induces a long exact sequence  
\[
\cdots \rt H^i_I(R) \rt H^i_J(\ov{R}) \rt H^{i+1}_I(R) \rt \cdots.
\]
Thus $H^i_J(\ov{R}) = 0$ for $i > g$. Therefore  $J$ is a Peskine-Szpiro ideal of $\ov{R}$.
\end{proof}

We now show that \CM \ ideals of small dimensions are Peskine-Szpiro. This result is already known, However we give a proof due to lack of a suitable reference. 

\begin{proposition}\label{small}
Let $(R,\m)$ be a regular local ring containing a field $K$.  Let $I$ be a  \CM \ ideal with $\dim R - \height I  \leq  2$. Then $I$ is a Peskine-Szpiro ideal.
\end{proposition}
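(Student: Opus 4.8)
The plan is to verify the vanishing $H^i_I(R)=0$ for each $i>g$ separately, and since $\dim R/I = d-g \le 2$ the only candidates are $i=d$ and $i=d-1$. Passing to the completion $\wh R$ (which is again regular local, contains $K$, and has $\wh R/I\wh R$ \CM \ of the same dimension), I may assume $R$ is complete, because $H^i_I(R)\otimes_R \wh R \cong H^i_{I\wh R}(\wh R)$ and $\wh R$ is faithfully flat over $R$. If $d-g\le 1$ there is nothing beyond $i=d$ to treat, so the heart of the matter is the degree $i=d-1$ when $d-g=2$.

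First I would dispose of the top degree $i=d$. Whenever $g<d$ we have $\dim R/I = d-g\ge 1$, and since $\wh R$ is a complete regular local domain the Hartshorne--Lichtenbaum vanishing theorem gives $H^d_I(R)=0$. This settles the case $d-g=1$ completely and handles one of the two degrees when $d-g=2$. Now suppose $d-g=2$, so that $\ov R := R/I$ is \CM \ of dimension $2$, and set $M:=H^{d-1}_I(R)$. The next step is to pin down $\Supp M$. For a prime $P\supseteq I$ with $P\neq \m$ we have $\height P \le d-1$ and $M_P \cong H^{d-1}_{I_P}(R_P)$; this vanishes for dimension reasons when $\height P \le d-2$, while for $\height P = d-1$ the ring $R_P$ is regular local of dimension $d-1$ with $\dim R_P/I_P = (d-1)-g = 1 > 0$ (here I use that the \CM \ ideal $I$ is unmixed, so $\height I_P = g$ for all $P\in \Supp \ov R$), and Hartshorne--Lichtenbaum again forces $H^{d-1}_{I_P}(R_P)=0$. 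Hence $\Supp M \subseteq \{\m\}$.

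By \ref{HSL}(ii) this gives $\injdim_R M \le \dim \Supp M \le 0$, so $M$ is injective and $\m$-torsion; therefore $M \cong E^{\mu}$, where $E = E_R(R/\m)$ and the multiplicity $\mu = \mu_0(\m, M)$ is finite by \ref{HSL}(iv). It remains to show $\mu=0$, and this is the crux. The difficulty is that multiplication by a parameter is surjective on $E$, so neither the slicing long exact sequence of Proposition \ref{perm} nor a Nakayama-type argument can by itself distinguish $\mu=1$ from $\mu=0$; indeed, if $I$ were already known to be Peskine--Szpiro then Theorem \ref{main} would force $\mu_{d-g}(\m,H^g_I(R))=1$, so one is really trying to rule out a single spurious copy of $E$.

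I would resolve this through connectivity. Since $\depth \ov R = \dim \ov R = 2 \ge 2$, one has $H^0_\m(\ov R)=H^1_\m(\ov R)=0$, so the global sections of the structure sheaf on the punctured spectrum $\Spec \ov R \setminus \{\m\}$ equal $\ov R$, a ring with no nontrivial idempotents; hence that punctured spectrum is connected (Hartshorne's connectedness theorem). By the Second Vanishing Theorem for local cohomology over regular local rings containing a field, connectedness of the punctured spectrum of $\wh{\ov R}$ together with $\dim \ov R \ge 2$ yields $H^{d-1}_I(R)=0$, that is $\mu=0$. This appeal to the Second Vanishing Theorem is the main obstacle; everything else in the argument is elementary, using only Hartshorne--Lichtenbaum and the properties recorded in \ref{HSL}.
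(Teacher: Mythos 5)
Your outline tracks the paper's own proof quite closely (Hartshorne--Lichtenbaum for the top degree; connectedness of the punctured spectrum plus the characteristic-zero Second Vanishing Theorem of Ogus for degree $d-1$), but the step you yourself flag as the crux invokes a false statement. There is no ``Second Vanishing Theorem over regular local rings containing a field'' in which completeness of $R$ together with connectedness of the punctured spectrum of $\wh{\ov R}$ suffices: the theorem (Ogus in characteristic $0$, Peskine--Szpiro in characteristic $p$, Huneke--Lyubeznik in equal characteristic) requires the complete regular local ring to have \emph{separably closed residue field}, and your reduction only achieves completeness. This hypothesis is not removable. Standard example: let $R=\mathbb{R}[[x,y,z,w]]$ and let $P$ be the kernel of the map $R \rt \mathbb{C}[[u,v]]$ sending $x,y,z,w$ to $u,iu,v,iv$. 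Then $R/P$ is a complete local \emph{domain} of dimension $2$, so its punctured spectrum is certainly connected; but
\[
P\,\mathbb{C}[[x,y,z,w]] = (y-ix,\, w-iz)\cap(y+ix,\, w+iz)
\]
is the classical pair of planes meeting only at the closed point, and Mayer--Vietoris (both primes are height-two complete intersections, and their sum is the maximal ideal) gives $H^3_{P\mathbb{C}[[x,y,z,w]]}(\mathbb{C}[[x,y,z,w]]) \cong H^4_{(x,y,z,w)}(\mathbb{C}[[x,y,z,w]]) \neq 0$, whence $H^{d-1}_P(R)=H^3_P(R)\neq 0$ by faithful flatness. So connectedness must be checked \emph{geometrically}, i.e.\ after extending the residue field, and your argument never arranges this.

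The gap is easy to repair, and the repair is exactly what the paper does: instead of merely completing, pass to a faithfully flat extension $(B,\n)$ of $R$ with $\m B=\n$, $B$ complete and $B/\n$ algebraically closed. Since the closed fibre of the flat local map $R/I \rt B/IB$ is a field, $B/IB$ is again \CM\ of dimension $2$, so your depth argument ($\depth B/IB \geq 2$ forces $H^0_{\n}(B/IB)=H^1_{\n}(B/IB)=0$, hence a connected punctured spectrum) applies over $B$, where Ogus's theorem is legitimately available; this gives $H^{d-1}_{IB}(B)=0$ and then $H^{d-1}_I(R)=0$ by faithful flatness. Note that the \CM\ hypothesis is precisely what makes this transfer work: in the counterexample above $R/P$ has depth $1$, so it is not \CM, and the connectedness genuinely fails to ascend. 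Two smaller remarks: once the vanishing theorem is correctly invoked, your preliminary reductions showing $\Supp H^{d-1}_I(R)\subseteq\{\m\}$ and $H^{d-1}_I(R)\cong E_R(R/\m)^{\mu}$ become superfluous (the paper skips them and concludes directly); and the paper first disposes of characteristic $p>0$ by Peskine--Szpiro's theorem itself, so that only the characteristic-zero form of the Second Vanishing Theorem is ever needed.
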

\begin{proof}
We  have nothing to show if $\charp K = p > 0$. So we assume $\charp K = 0$.
Let $\dim R = d$ and $\height I = g$.

If $g = d$ then $I$ is $\m$-primary. By Grothendieck vanishing theorem we have $H^i_I(R) = 0$ for $i > d$. So $I$ is a Peskine-Szpiro ideal.

Now consider the case when $g = d-1$. Note $\dim R/I  = 1$. So $\dim \widehat{R}/I \widehat{R} = 1$. By Hartshorne-Lichtenbaum theorem, cf. \cite[Theorem 14.1]{a7}, we have that $H^d_{I\widehat{R}}(\widehat{R}) = 0$. By  faithful flatness we get $H^d_I(R) = 0$.

Finally we consider the case when $g = d -2$. We choose a flat extension $(B,\n)$ of $R$ with $\m B = \n$, $B$ complete and $B/\n$ algebraically closed. We note that $B/IB$ is \CM \ and $\dim B/IB = 2$. As $B/IB$ is \CM \ we get that the punctured spectrum $\Spec^\circ(B/IB)$ is connected see \cite[Proposition 15.7]{a7}.  So $H^{d-1}_{IB}(B) = 0$ by a result due to Ogus \cite[2.11]{O}. By faithful faithfulness we get 
$H^{d-1}_I(R) = 0$. By an argument similar to the previous case we also get $H^d_I(R) = 0$.  

\end{proof}

\section{Proof of Theorem \ref{main}}
In this section we prove our main result. The following remarks are relevant.
\begin{remark}\label{rem}
\begin{enumerate}
\item
Notice for any ideal $J$ of height $g$ we have $\Ass H^g_J(R) = \{ P \mid P\supset J \ \text{and} \  \height P  = g \}$. Also for any such prime ideal $P$ we have $\mu_0(P, H^g_J(R))  = 1$.
\item
Let $I$ be a \CM \ ideal of height $g$ in a regular local ring. Let $P$ be an ideal of height $g+r$ and containing $I$.  We note that $\dim H^g_I(R)_P  = r$. So  by \ref{HSL}  we get  $\injdim_{R_P}H^g_I(R)_P \leq r$. Thus $\mu_i(P, H^g_I(R)) = 0$ for $i > r$.
\end{enumerate}
\end{remark}

Let us recall the following result due to Rees, cf. \cite[3.1.16]{BH}.
\s \label{rees} Let $S$ be a commutative ring and let $M$ and $N$ be $S$-modules. (We note that $S$ need not be Noetherian. Also $M, N$ need not be finitely generated as $S$-modules.)
Assume there exists $x \in S$ such that it is $S\oplus M$-regular and $x N = 0$. Set $T = S/(x)$. Then $\Hom_S(N, M) = 0$ and for $i \geq 1$ we have
\[
\Ext^i_S(N,M) \cong \Ext^{i-1}_T(N, M/xM).
\]

We now give:
\begin{proof}[Proof of Theorem \ref{main}]
We first  prove $ \rm{(i)} \implies \rm{(ii)}$.  So $I$ is a Peskine-Szpiro ideal.  We prove our result by induction on $d-g$. 

If $d-g = 0$ then $I$ is $\m$-primary. So $H^d_I(R) = H^d_\m(R)  = E_R(R/\m) $ the injective hull  of the residue field. Clearly $\mu_0(\m, H^d_I(R)) = 1$ and $\mu_i(\m , H^d_I(R)) = 0$ for $i \geq 1$. 

Now assume $d - g = 1$. If $P$ is a prime ideal of $R$ containing $I$ with $\height P = d-1$ then by \ref{rem} we have $\mu_0(P, H^{d-1}_I(R)) = 1$ and $\mu_i(P, H^{d-1}_I(R)) = 0$ for $i\geq 1$.  We now consider the case when $P = \m$. By \ref{rem} we have $\mu_0(\m, H^{d-1}_I(R)) = 0$. Choose $x \in \m \setminus \m^2$ which is $R/I$-regular.  Set $\ov{R} = R/(x)$, $\n = \m/(x)$ and $J = I\ov{R}  = (I + (x))/(x)$. Then $J$ is $\n$-primary. The exact sequence $0 \rt R \xrightarrow{x} R \rt \ov{R} \rt 0$ induces the following exact sequence in cohomology
\[
0 \rt H^{d-1}_I(R) \xrightarrow{x} H^{d-1}_I(R) \rt H^{d-1}_J(\ov{R}) \rt 0.
\]
Here we have used that $I$ is a Peskine-Szpiro ideal and $J$ is $\n$-primary. Thus by \ref{rees} we have for $i \geq 1$,
\[
\mu_i(\m, H^{d-1}_I(R)) = \mu_{i-1}(\n, H^{d-1}_J(\ov{R})) =  \begin{cases}  1 & \text{if} \ i = 1,\\  0 & \text{otherwise}.   \end{cases}
\]
Thus the result follows in this case.

Now consider the case when $d - g \geq 2$. Let $P$ be a prime ideal in $R$ containing $I$ of height $g + r$. We first consider the case when $P \neq \m$.  By \ref{perm} we get that $I_P$ is a Peskine-Szpiro ideal of height $g$ in $R_P$. Also $\dim R_P - g < d -g$. So by induction hypothesis we have
\[
\mu_i(P, H^i_I(R)) = \mu_i(PR_P, H^i_{IR_P}(R_P)) =  \begin{cases} 1 & \text{if} \ i = r, \\ 0 &\text{otherwise}. \end{cases}
\]
\end{proof}
We now consider the case when $P = \m$. By \ref{rem} we have $\mu_0(\m, H^{g}_I(R)) = 0$. Choose $x \in \m \setminus \m^2$ which is $R/I$-regular.  Set $\ov{R} = R/(x)$, $\n = \m/(x)$ and $J = I\ov{R}  = (I + (x))/(x)$. Then $J$ is  height $g$ Peskine-Szpiro ideal in $\ov{R}$, see \ref{perm}. The exact sequence $0 \rt R \xrightarrow{x} R \rt \ov{R} \rt 0$ induces the following exact sequence in cohomology
\[
0 \rt H^{g}_I(R) \xrightarrow{x} H^{g}_I(R) \rt H^{g}_J(\ov{R}) \rt 0.
\]
Here we have used that $I$ is a Peskine-Szpiro ideal in $R$ and $J$ is  Peskine-Szpiro ideal in $\ov{R}$. Thus by \ref{rees} we have for $i \geq 1$,
\[
\mu_i(\m, H^{g}_I(R)) = \mu_{i-1}(\n, H^{g}_J(\ov{R})) =  \begin{cases}  1 & \text{if} \ i-1 = d-1-g,\\  0 & \text{otherwise}.   \end{cases}
\]
For the latter equality we have used induction hypothesis on the Peskine-Szpiro ideal $J$ (as $\dim \ov{R} - \height J = d-1 - g $). We note that $i-1 = d-1-g$ is same as $i = d-g$. Thus we have
\[
\mu_i(\m, H^g_I(R)) = \begin{cases} 1 & \text{if} \ i = d -g, \\ 0 &\text{otherwise}. \end{cases} 
\]  

We now prove $\rm{(ii)} \implies \rm{(i)}$.  By Peskine and Szpiro's result we may assume $\charp K = 0$.  We prove the result by induction on $d - g$. If $d-g \leq 2 $ then the result holds by Proposition \ref{small}.  So we may assume $d - g \geq 3$. Let $P$ be a prime ideal in $R$  containing $I$ with $P \neq \m$.  The ideal $I_P$ is a \CM \ ideal of height $g$ in $R_P$ satisfying the condition  $\rm{(ii)}$ on Bass numbers of $H^g_{I_P}(R_P)$.  As $\dim R_P - g  < d -g$ we get by our induction hypothesis  that $I_P$ is a Peskine-Szpiro ideal in $R_P$. Thus $H^i_{I_P}(R_P) = 0$ for $i > g$. It follows that  $\Supp H^i_I(R) \subseteq \{ \m \}$ for $i > g$. Let $k = R/\m$ and let $E_R(k)$  be the injective hull of $k$ as a $R$-module. Then by \ref{HSL}  there exists non-negative integers $r_i$ with
\begin{equation}\label{ps-e-1}
H^i_I(R) = E_R(k)^{r_i} \quad \text{for} \  i > g.
\end{equation}
Choose $x \in \m \setminus \m^2$ which is $R/I$-regular.  Set $\ov{R} = R/(x)$, $\n = \m/(x)$ and $J = I\ov{R}  = (I + (x))/(x)$. Then $J$ is  height $g$ \CM \  ideal in $\ov{R}$.  The exact sequence $0 \rt R \xrightarrow{x} R \rt \ov{R} \rt 0$ induces the following exact sequence in cohomology
\begin{equation}\label{ps-e-2}
0 \rt H^{g}_I(R) \xrightarrow{x} H^{g}_I(R) \rt H^{g}_J(\ov{R}) \rt H^{g+1}_I(R) \xrightarrow{x} H^{g+1}_I(R) \rt \cdots
\end{equation}
We consider two cases: \\
\textit{Case 1 :}  $H^{g+1}_I(R) \neq 0$. \\
We note that $\Hom_R(\ov{R}, E_R(k)) = E_{\ov{R}}(k)$. Thus the short exact sequence 
$0 \rt R \xrightarrow{x} R \rt \ov{R} \rt 0 $ induces an exact sequence 
\begin{equation}\label{ps-e-3}
0 \rt E_{\ov{R}}(k) \rt E_R(k) \xrightarrow{x} E_R(k) \rt 0.
\end{equation}
By (\ref{ps-e-1}) and (\ref{ps-e-3})  the exact sequence (\ref{ps-e-2}) breaks down into two exact sequences
\begin{align}
\label{ps-e-4} 0 \rt H^g_I(R) &\xrightarrow{x} H^g_I(R) \rt V \rt 0, \\
\label{ps-e-5} 0 \rt V &\rt H^g_J(\ov{R}) \rt E_{\ov{R}}(k)^{r_{g+1}} \rt 0.
\end{align} 
As $J$ is a \CM \ ideal in $\ov{R}$ with $\dim \ov{R}/J  = d-1-g \geq 2$ we get by \ref{rem} that $\n \notin \Ass_{\ov{R}} H^g_J(\ov{R})$. It follows from (\ref{ps-e-5})  that $\mu_1(\n, V)  \geq r_{g+1} > 0$.
By (\ref{ps-e-4})  and \ref{rees} we get that 
\[
\mu_{2}(\m, H^g_I(R)) = \mu_1(\n, V) > 0.
\]
So by our hypothesis we get $d-g = 2$. This is a contradiction as we assumed $d-g \geq 3$.

\textit{Case 2 :} $H^{g+1}_I(R) = 0$. \\
By (\ref{ps-e-2}) we get a short exact sequence,
\[
0 \rt H^g_I(R) \xrightarrow{x} H^g_I(R) \rt H^g_J(\ov{R}) \rt 0.
\]
Again by \ref{rees} we get that the \CM \ ideal $J$ of $\ov{R}$ satisfies the conditions $\rm{(ii)}$ of our Theorem. As $\dim \ov{R} - \height J = d-g-1$ we get by induction hypothesis that 
$J$ is Peskine-Szpiro ideal in $\ov{R}$.  Thus $H^i_J(\ov{R}) = 0 $ for $i > g$. Using (\ref{ps-e-1}) and (\ref{ps-e-3}) it follows that $H^i_I(R) = 0$ for $i \geq g + 2$.
Also by our assumption $H^{g+1}_I(R) = 0$. Thus $I$ is a Peskine-Szpiro ideal of $R$.

\end{document}